\documentclass[11pt,letterpaper]{amsart}

\usepackage{amsmath,amssymb,amsthm}
\usepackage[english]{babel}

\newtheorem{theorem}{Theorem}

\theoremstyle{definition}

\newtheorem{definition}{Definition}[section]
\newtheorem{remark}{Remark}[section]
\newtheorem{lemma}{Lemma}[section]

\title[Topological Expansivity and Shadowing]{Topological Expansivity and Shadowing for Anosov Diffeomorphisms}

\author{Z. Li}

\address{School of Mathematics, Sun Yat-sen University, Zhuhai, China.}
\email{zixu.li915@gmail.com}

\author{C.A. Morales}
\address{Hangzhou International Innovation Institute of Beihang University, Hangzhou 311115, China.}
\email{morales@impa.br}

\author{S. Roma\~na}
\address{School of Mathematics (Zhuhai), Sun Yat-sen University, China}
\email{sergio@mail.sysu.edu.cn}

\author{Y. Yang}

\address{School of Mathematics and Statistics, Liaoning University, Shenyang, 110036, China.}
\email{yangyinong@lnu.edu.cn}

\begin{document}

\begin{abstract}
We show that every Anosov diffeomorphism is topologically expansive in the sense of \cite{lny}. We also prove that Lipschitz shadowing forces the stable and unstable bundles to be uniformly transverse, with a quantitative bound in terms of the shadowing constant. Finally, we give two examples on complete Riemannian manifolds: one is expansive but does not have the shadowing property, while the other has finite volume, bounded sectional curvature, Lipschitz shadowing, and orthogonal invariant bundles, but is not expansive with respect to its Riemannian distance. 
\end{abstract}
\maketitle
\section{Introduction}\label{sec:introduction}

\noindent
For an Anosov diffeomorphism of a compact Riemannian manifold,
hyperbolicity has several familiar uniform consequences. The invariant (i.e. stable and unstable) bundles meet at an angle bounded away from zero \cite{hp}, nearby stable and unstable plaques have a uniform local product structure, the map is expansive, and the shadowing property holds. Compactness makes the local geometric estimates entering these arguments uniform.
On a noncompact manifold the Anosov estimates may remain uniform while the geometric quantities entering the usual compact arguments need not \cite{m}. Local coordinate scales may shrink at infinity, the angle between the invariant bundles may have no positive lower bound, and equivalent Riemannian metrics need not be uniformly equivalent. This makes it natural to ask which expansivity and shadowing conclusions persist without additional uniform geometric control.
Topological formulations of expansivity and shadowing on noncompact spaces were developed by Das, Lee, Richeson and Wiseman
\cite{dlrw}. Lee, Nguyen and Yang subsequently used positive continuous control functions in the study of topological stability and spectral decomposition on noncompact spaces
\cite{lny}. In particular, a continuous expansivity function may tend to zero at infinity while still separating distinct orbits through all iterates.
Under stronger hypotheses one can recover the usual uniform conclusions. Ben Ovadia and DeWitt consider Anosov diffeomorphisms of complete open surfaces under stronger uniform geometric hypotheses \cite{bd}. In that setting a uniform local product structure is available, together with the usual shadowing and expansivity conclusions. For topological expansivity, these additional assumptions are not needed.

Our results describe which parts of the compact Anosov picture survive without uniform control of the geometry at infinity. First, every Anosov-realizable diffeomorphism is topologically expansive. Shadowing behaves differently: we construct an Anosov diffeomorphism that is expansive for its Riemannian distance but does not have the shadowing property. On the other hand, Lipschitz shadowing forces the invariant bundles to be uniformly transverse, with an explicit bound in terms of the shadowing constant. Finally, uniform transversality still does not recover metric expansivity: we construct a complete finite-volume example with bounded sectional curvature, orthogonal invariant bundles, and Lipschitz shadowing that has no expansivity constant for its Riemannian distance.

We now state our results in a precise way.

Let $(Y,\rho)$ be a metric space, endowed with the topology induced by
$\rho$, and let $g\colon Y\to Y$ be a homeomorphism. We say that $g$ is
\emph{expansive} \cite{u} if there exists $\epsilon>0$ such that
\[
\rho\bigl(g^n(x),g^n(y)\bigr)\leq \epsilon
\qquad\text{for every }n\in\mathbb Z
\]
implies that $x=y$. Such a constant is called an \emph{expansivity
constant} for $g$.

If $Y$ is compact, any two metrics inducing its topology are uniformly equivalent. Consequently, if $g$ is expansive with respect to one metric, then it is expansive with respect to every metric. Thus, in the compact case, we may simply say that $g$ is expansive, without specifying the metric.

On a noncompact space, however, expansivity may depend on the choice of metric. To overcome this difficulty, the authors of \cite{lny} introduced the following notion.

\begin{definition}
A homeomorphism $g\colon Y\to Y$ is said to be
\emph{topologically expansive} if there exists a continuous function
$\varepsilon\colon Y\to(0,\infty)$ such that
\[
\rho\bigl(g^n(x),g^n(y)\bigr)
  <\varepsilon\bigl(g^n(x)\bigr)
\qquad\text{for every }n\in\mathbb Z
\]
implies that $x=y$. Such a function is called an
\emph{expansivity function} for $g$.
\end{definition}

Topological expansivity is invariant under topological conjugacy and,
in particular, is independent of the chosen metric; see \cite{y}.

Throughout, $M$ denotes a connected finite-dimensional
differentiable manifold with a fixed differentiable structure and positive dimension.
A $C^1$ diffeomorphism $f\colon M\to M$ is said
to be \emph{Anosov with respect to a Riemannian metric}, whose induced norm is denoted by $\|\cdot\|$, if there exist a continuous $Df$-invariant splitting
\[
TM=E^s\oplus E^u
\]
and constants $C>0$ and $\lambda\in(0,1)$ such that
\[
\left\lVert \left.Df_x^n\right|_{E_x^s}\right\rVert
   \leq C\lambda^n,
\qquad
\left\lVert \left.Df_x^{-n}\right|_{E_x^u}\right\rVert
   \leq C\lambda^n
\]
for every $x\in M$ and every $n\geq 0$.

If $M$ is compact, any two Riemannian metrics on $M$ are uniformly
bi-Lipschitz equivalent. It follows that if $f$ is Anosov with respect
to one Riemannian metric, then it is Anosov with respect to every
Riemannian metric. Thus, in the compact case, we may simply say that
$f$ is Anosov, without specifying the Riemannian metric.

On a noncompact manifold, however, to be Anosov depends on the Riemannian metric, as will be illustrated by the examples
below. This motivates the following terminology.

\begin{definition}
A diffeomorphism $f\colon M\to M$ is called {\em Anosov-realizable} (or
\emph{Anosable} for short) if it is Anosov with respect to some Riemannian metric of $M$.
\end{definition}

It is well known that every Anosable diffeomorphism of a compact manifold is expansive. This conclusion need not hold on a noncompact manifold; the affine example below gives a simple instance. See also \cite{bud} for the corresponding phenomenon in the setting of geodesic flows. Nevertheless, we prove that topological expansivity always holds.

\begin{theorem}\label{thm:topological-expansivity}
Every Anosable diffeomorphism is topologically expansive.
\end{theorem}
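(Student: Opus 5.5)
Let $f$ be Anosov with respect to a Riemannian metric $\langle\cdot,\cdot\rangle$, with distance $d$ and constants $C,\lambda$. Topological expansivity does not depend on the metric, so we may use $d$ itself. The plan is to build a continuous function $\varepsilon\colon M\to(0,\infty)$ from local data. Near each point, $f$ and $f^{-1}$ are controlled in exponential charts, and the invariant splitting has a positive angle. Compactness is replaced by a locally finite choice of scales. Concretely, for each $x$ we define $r(x)>0$ such that on the ball $B(x,r(x))$ the following three conditions hold. First, $\exp_x$ is a diffeomorphism with derivative $2$-bi-Lipschitz. Second, the splitting varies so little that cones $K^s,K^u$ of fixed aperture around $E^s,E^u$ are well defined and transverse. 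Third, $Df$ is close enough to being constant in these charts. All of these are open conditions on $x$, so $r$ can be taken lower semicontinuous and positive. Taking the supremum over admissible radii, cut down by $1$-Lipschitz regularization, gives a continuous positive minorant. A natural candidate is $\varepsilon(x)=\min\{r(x),r(f(x)),r(f^{-1}(x))\}/K$ with a large constant $K$.

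The core step is a cone argument for an adapted metric. Replacing the norm by a Mather-type adapted norm $\|v\|'=\sum_{n=0}^{N-1}\lambda^{-n}\|Df^n v\|$ on $E^s$, and similarly on $E^u$ with $f^{-n}$, gives $\|Df|_{E^s}\|'\le\mu<1$ and $\|Df^{-1}|_{E^u}\|'\le\mu$. Here $N$ depends only on $C,\lambda$. The new norm need not be uniformly equivalent to the old one; it is only continuous, which suffices for a continuous $\varepsilon$. Suppose $d(f^n x,f^n y)<\varepsilon(f^n x)$ for all $n$. Then we set $v_n=\exp^{-1}_{f^nx}(f^ny)$ and write $v_n=v_n^s+v_n^u$. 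The smallness of $Df$ variation in the chart of size $r$ gives $v_{n+1}\approx Df_{f^nx}v_n$ with an error that is small relative to $|v_n|$. Hence, if $\|v_n^u\|'\ge\|v_n^s\|'$, the unstable component grows by a factor $\mu^{-1}-\delta>1$ and the cone condition propagates forward. Symmetrically, if the stable component dominates, it grows under backward iteration.

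In either case $|v_n|$ grows exponentially along one half-orbit. This contradicts $|v_n|\le 2\varepsilon(f^nx)$ unless $v_0=0$, i.e.\ $x=y$. The subtlety is that $\varepsilon(f^nx)$ is not bounded below. However, the growth is relative. We compare $|v_{n+1}|$ with $|v_n|$ after a transfer between charts at $f^nx$ and $f^{n+1}x$, with controlled distortion because both lie in the regime of radius $r(f^nx)$. We therefore only need ratios to be bounded below by a fixed constant greater than $1$. This gives $|v_n|\ge c^n|v_0|$ for some $c>1$, which is incompatible with $|v_n|\le 2\varepsilon(f^nx)$ only if $\varepsilon$ along the orbit does not decay faster than $c^{-n}$. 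To handle this, we argue differently: $x\neq y$ forces, for some $n$, the displacement $|v_{n}|$ to exceed the fixed-ratio threshold of the local chart. This is a contradiction because the chart hypotheses hold exactly at scale $\varepsilon(f^nx)$, so linear growth factors stay valid until the point leaves the ball.

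The main obstacle is precisely this uniformity. We need the nonlinear error, the chart distortion, the comparison between $\|\cdot\|'$ and $\|\cdot\|$, and the variation of $E^{s/u}$ to be absorbed at each step by factors independent of the point, while the radius itself may shrink. We would first try to make all local conditions scale-invariant: define $r(x)$ as the largest radius at which $\|Df_z-\tau Df_x\tau^{-1}\|$, measured in adapted norms, and the variation of the splitting are below a fixed $\delta$. Then every estimate holds with a fixed $\delta$ on $B(f^nx,r(f^nx))$, and the growth argument closes without any global bound.
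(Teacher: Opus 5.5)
Your architecture is the paper's: a norm adapted to the splitting (the paper uses the max norm for $F=f^N$ instead of a Mather norm), local cone expansion for $\exp_{f(x)}^{-1}\circ f\circ\exp_x$ on a neighborhood of the zero section that may be arbitrarily thin at infinity, and a continuous scale function encoding that neighborhood. The gap is in the endgame, which you never close.

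The obstruction you raise is not real. The hypothesis only gives an \emph{upper} bound on the displacement. Once $|v_n|\ge c^n|v_0|$ holds for all $n\ge0$, any bounded $\varepsilon$ gives $c^n|v_0|\le 2\sup\varepsilon<\infty$ for all $n$, which forces $v_0=0$. Each step of that growth is legitimate precisely because the hypothesis keeps $f^ny$ within the admissible scale at $f^nx$ for every $n$. Decay of $\varepsilon(f^nx)$ along the orbit only strengthens the contradiction. Your claim that incompatibility holds ``only if $\varepsilon$ does not decay faster than $c^{-n}$'' has the inequality backwards. Your replacement argument (``$x\neq y$ forces the displacement to exceed the threshold'') just restates what must be proved.

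What does need care is the norm in which the per-step factor $c>1$ is uniform. It is uniform in the split norm $\|v\|_{*,x}=\max\{\|v^s\|,\|v^u\|\}$ (or your adapted variant). It is not uniform in the Riemannian norm, because $\|\pi^s_x\|$ and $\|\pi^u_x\|$ may be unbounded. So the comparison between the two norms cannot be ``absorbed at each step'' by point-independent factors, as your last paragraph proposes. It must be used exactly once, to turn the hypothesis into a uniform upper bound in the split norm.

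The paper does this as follows:
\begin{itemize}
\item Take an open neighborhood $U$ of the zero section on which $\Phi_x(v)\in C^u_{F(x)}$ and $\|\Phi_x(v)\|_*\ge\mu\|v\|_*$ for $v\in C^u_x$, with the analogue for $F^{-1}$ on $C^s_x$, and on which $\|v\|_{*,x}<1$.
\item Let $\Omega$ be the image of $U$ under $(x,v)\mapsto(x,\exp_xv)$.
\item Put $\delta(x)=\frac12\min\{1,\operatorname{dist}_D((x,x),(M\times M)\setminus\Omega)\}$.
\end{itemize}
Then $\mu^n\|v_0\|_*\le\|v_n\|_*<1$ for all $n\ge0$, which gives $v_0=0$.

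Alternatively, after one step $v_n$ lies in a strict cone $\|v^s\|\le\theta\|v^u\|$ with $\theta<1$. There $\|v\|\ge(1-\theta)\|v\|_*$ by the triangle inequality, so Riemannian lengths also blow up. Some observation of this kind is needed.

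The paper's device also replaces your ``largest admissible radius'' and its semicontinuity bookkeeping with the distance to a closed set.
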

The proof combines local cone expansion with a variable neighborhood of the diagonal to construct an expansivity function.

We record two simple consequences of
Theorem~\ref{thm:topological-expansivity}.

\begin{remark}\label{ascenso}
Let $d\geq2$, let $A\in\operatorname{GL}(d,\mathbb R)$, and let
$v\in\mathbb R^d$. The result of \cite{lrr} shows that the affine map
$$
f_{A,v}(x)=Ax+v
$$
admits a complete Riemannian metric with respect to which it is Anosov
whenever either $A$ is hyperbolic or
$$
v\notin\operatorname{Im}(I-A).
$$
Hence Theorem~\ref{thm:topological-expansivity} implies that
$f_{A,v}$ is topologically expansive in either case. For example,
$$
f(x,y)=(x+1,-y)
$$
is topologically expansive, although it is not expansive for the Euclidean metric.
\end{remark}

The converse to Theorem~\ref{thm:topological-expansivity} is false even on compact manifolds: there exist expansive quasi-Anosov diffeomorphisms that are not Anosov \cite{fr}.

Theorem~\ref{thm:topological-expansivity} also gives a simple obstruction to Anosov realizability. A point $x\in Y$ is called \emph{topologically equicontinuous} for $g$ if, for every continuous function $\varepsilon:Y\to(0,\infty)$, there exists a
neighborhood $V$ of $x$ such that
$$
\rho(g^n(x),g^n(y)) < \varepsilon(g^n(x))
\qquad\text{for every }n\in\mathbb Z
$$
whenever $y\in V$.

A topologically expansive homeomorphism of a metric space without isolated points has no topologically equicontinuous points. Indeed,
applying the definition at such a point to an expansivity function would force $V=\{x\}$. Consequently, an Anosable diffeomorphism has no topologically equicontinuous points. In particular, the identity map and Euclidean rotations are not Anosable.

We next turn to shadowing. We say that a bi-infinite sequence $(x_n)_{n\in\mathbb Z}$ is a
$\delta$-pseudo-orbit of $g$ if
$$\rho(g(x_n),x_{n+1})\leq\delta$$
for every $n\in\mathbb Z$. The map $g$ has the {\em shadowing property} (or is {\em shadowable}) if, for every $\varepsilon>0$, there is $\delta>0$ such
that every $\delta$-pseudo-orbit is $\varepsilon$-shadowed by an orbit, that is, there is $x\in Y$ such that $$\rho(g^n(x),x_n)\leq\varepsilon$$
for all $n\in\mathbb{Z}$.

It is also well known that every Anosov diffeomorphism of a compact
manifold has the shadowing property. On a noncompact space, however,
the shadowing property can depend on the chosen compatible metric; see,
for example, \cite{dlrw,dgs}. Cousillas observed that the planar saddle
$$
A(s,u)=\left(\frac{s}{2},2u\right)
$$
can lose the metric shadowing property after a compatible remetrization
\cite{c}. The next result gives the same phenomenon for a complete
conformal Riemannian metric while preserving the usual stable and unstable
directions and hyperbolic rates.

\begin{theorem}\label{ex:no-shadowing}
There exists a complete Riemannian metric on $\mathbb R^2$ under which the linear operator
$$
A(s,u)=\left(\frac{s}{2},2u\right)
$$
is Anosov but not shadowable.
\end{theorem}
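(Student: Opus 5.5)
The plan is to take a conformal metric $\langle\cdot,\cdot\rangle_x=e^{2\varphi(x)}(ds^2+du^2)$ on $\mathbb R^2$, with $\varphi$ smooth and unbounded. Because the metric is conformal, $E^s=\mathbb R\times\{0\}$ and $E^u=\{0\}\times\mathbb R$ stay the invariant splitting. For $v\in E^s_x$,
\[
\frac{\|DA^n_xv\|_{A^nx}}{\|v\|_x}=2^{-n}\,e^{\varphi(A^nx)-\varphi(x)},
\]
and the analogous identity holds on $E^u$ for $A^{-n}$.

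\emph{Anosov condition.} The bounds hold with constants $C$ and $\lambda\in(0,1)$ if, along every orbit,
\[
|\varphi(A^{\pm n}x)-\varphi(x)|\le \log C+n\kappa\qquad\text{with }\kappa<\log 2 .
\]
Two things are ruled out.
\begin{itemize}
\item A bounded $\varphi$ is useless: it gives a metric uniformly equivalent to the Euclidean one, and $A$ is Euclidean-shadowable.
\item An $A$-invariant $\varphi$, for instance a function of $su$, is also useless. On each invariant hyperbola the factor is constant, so the Euclidean linear shadowing estimates carry over with the same local scale.
\end{itemize}
So $\varphi$ must be unbounded and non-invariant, but have slope at most $\kappa$ per iterate along orbits. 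I would first try $\varphi(x)=-\kappa\log(1+|x|)$ with $0<\kappa<1$. Since $|A^{\pm1}x|\le 2|x|$, the increment per iterate is at most $\kappa\log 2$. Replacing $\kappa$ by $\kappa\log2$ in the exponent, the contraction rate becomes $2^{-1+\kappa}<1$.

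\emph{Completeness.} The metric dominates the rotationally symmetric metric $(1+r)^{-\kappa}(dr^2+r^2d\theta^2)$. Every curve to infinity has length at least $\int_0^\infty(1+r)^{-\kappa}\,dr=\infty$ because $\kappa\le1$. By Hopf--Rinow the metric is complete.

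\emph{Failure of shadowing.} Fix $\varepsilon>0$ and let $\delta>0$ be arbitrary. Far out, at Euclidean radius $R$, a jump of new-metric size $\delta$ has Euclidean size about $\delta R^{\kappa}$, which tends to infinity with $R$. I would build a pseudo-orbit that sits at radius about $R$ near the $u$-axis and, at each step, jumps in the $u$-direction so as to undo part of the expansion $u\mapsto 2u$. I would take $N$ consecutive steps, with $N$ large, and follow the genuine orbit before and after.

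The shadowing point must then satisfy two constraints.
\begin{itemize}
\item Backward: its past is $\varepsilon$-close in the new metric, and hence Euclidean-close at scale $\varepsilon R^{\kappa}$, to a genuine orbit.
\item Forward: its future must track points whose Euclidean $u$-coordinate differs from the genuine $A$-image by an accumulated amount of order $\delta R^{\kappa}N$.
\end{itemize}
Expansion in $E^u$ then forces the shadow's $u$-coordinate after $N$ steps to differ from the pseudo-orbit's by roughly $2^{N}\cdot(\text{discrepancy})$. This has to be compared with the allowed tolerance $\varepsilon(1+|x_n|)^{\kappa}$, which grows only polynomially along the pseudo-orbit.

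\emph{Main obstacle.} The hard step is the last one. Linear shadowing is local, and its Euclidean error is a geometric sum of the local jumps. For the argument to work, the pseudo-orbit must move the point to regions where the conformal factor has changed by much more than the per-step jumps, so that this geometric sum is large compared with the local metric scale. If the weight $-\kappa\log(1+|x|)$ turns out to be too mild for this, I would replace it by one depending only on $s$, namely $\varphi=-\kappa\log(1+|s|)$. Then $u$-jumps near large $|s|$ are cheap in the new metric. Meanwhile the $s$-coordinate of the pseudo-orbit keeps shrinking by the factor $1/2$, so the shadow is eventually judged at scales where $e^{\varphi}\approx 1$. There the accumulated $u$-discrepancy, fixed once the pseudo-orbit has passed the cheap region, exceeds $\varepsilon$.
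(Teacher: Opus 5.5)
Your proposal has a genuine gap: neither candidate metric destroys shadowing. In fact both are Lipschitz shadowable. Take $\omega=e^{\varphi}=(1+|x|)^{-\kappa}$; the fallback $\varphi=-\kappa\log(1+|s|)$ behaves the same way. Then $|\nabla_g\log\omega|=\kappa(1+|x|)^{\kappa-1}\le\kappa$, so $\log\omega$ is $\kappa$-Lipschitz for $d_g$. Also $2^{-\kappa}\le\omega(Ax)/\omega(x)\le 2^{\kappa}$.

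Consider a $\delta$-pseudo-orbit with Euclidean errors $e_n=x_{n+1}-Ax_n$, and put $W_n=\omega(x_n)$. The two facts above give $W_{n+1}|e_n|\le e^{\kappa\delta}\delta$ and $2^{-\kappa}e^{-\kappa\delta}\le W_{n+1}/W_n\le 2^{\kappa}e^{\kappa\delta}$. Now form the Green series
\[
z_n=\sum_{j\ge0}A_s^j\pi_se_{n-1-j}-\sum_{j\ge0}A_u^{-j-1}\pi_ue_{n+j}.
\]
Its $j$-th stable term is bounded by $W_n^{-1}e^{\kappa\delta}\delta\,(2^{\kappa-1}e^{\kappa\delta})^{j}$, and the unstable terms obey the same kind of bound. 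Hence $W_n|z_n|\le C\delta$ for small $\delta$.

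The sequence $x_n-z_n$ is an $A$-orbit. Moreover $|z_n|\le C\delta(1+|x_n|)^{\kappa}$ is small compared with $1+|x_n|$, so the weight along the segment from $x_n$ to $x_n-z_n$ is at most $2^{\kappa}W_n$. Therefore $d_g\bigl(x_n,A^n(x_0-z_0)\bigr)\le 2^{\kappa}C\delta$. This is the weighted Green-series argument the paper itself uses for its warped product.

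Your heuristic breaks in two places. First, the shadow's unstable coordinate is determined by the future errors, so there is no forced $2^N$ amplification. Second, the allowed tolerance scales with the local weight exactly as the Euclidean size of the jumps does. The same computation shows shadowing survives whenever $\log\omega$ is uniformly $d_g$-Lipschitz and changes by less than $\log2$ per iterate. Your ``main obstacle'' is therefore a real obstruction, not a technicality.

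The missing idea is a barrier. The conformal factor must be huge on a thin shell, so that two Euclidean-close points become $g$-far while a small jump avoids the shell. The paper builds this from exactly the class you dismissed: $g=w(su)^2(ds^2+du^2)$, where $w\ge1$ carries tall, thin bumps $H_n\rho(su-c_n)$. Each ingredient does a specific job:
\begin{itemize}
\item Invariance of $su$ gives the Anosov estimates with $C=1$ and $\lambda=1/2$, however wild $w$ is.
\item The bound $w\ge1$ gives completeness and $d_g\ge d_{\mathrm{Eucl}}$.
\item Euclidean hyperbolicity then forces the only possible $\frac12$-shadow of the one-jump pseudo-orbit to be $r_n=(R_n,R_n-t_n)$. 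Here the pseudo-orbit is $A^ka_n$ for $k<0$ and $A^kq_n$ for $k\ge0$.
\item The shell $su\in[c_n-2,c_n-1]$ separates $r_n$ from $q_n$, so $d_g(r_n,q_n)\ge1$. Meanwhile the jump $d_g(a_n,q_n)=\sqrt2\,t_n$ tends to $0$.
\end{itemize}
Your claim that an invariant factor lets the Euclidean estimates ``carry over with the same local scale'' is false. The shadowing orbit and the pseudo-orbit lie on different level sets of $su$, and the factor need not be constant between them.
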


We say that $f$ has the Lipschitz shadowing property if there are $L>0$ and $\delta_0>0$ such that, for every $0<\delta<\delta_0$ and every $\delta$-pseudo-orbit $(x_n)_{n\in\mathbb Z}$, there is $z\in M$ satisfying
$$
d(f^n(z),x_n)\leq L\delta
\qquad\text{for every }n\in\mathbb Z.
$$
Since increasing $L$ preserves this property, we shall always take $L\geq1$.

A stronger conclusion follows if shadowing is assumed with a uniform Lipschitz estimate. In the compact setting, Pilyugin and Tikhomirov used bounded solutions of variational equations in their study of Lipschitz
shadowing and structural stability \cite{pt}. This is closely related to the classical connection between bounded solvability, exponential dichotomies, and transversality developed by Pliss \cite{pl} and Palmer
\cite{pal1,pal2}. In the present setting, a direct pointwise argument gives the following quantitative estimate.
\begin{theorem}\label{thm:shadowing-angle}
Let $f:M\to M$ be an Anosov diffeomorphism with the Lipschitz shadowing
property with constant $L\geq 1$. Then its stable and unstable bundles are uniformly
transverse. More precisely, if $\pi_x^s$ and $\pi_x^u$ are the complementary
projections associated with the Anosov splitting, then
$$
\|\pi_x^s\|,\ \|\pi_x^u\|\leq L
$$
for every $x\in M$, and consequently
$$
\sin\angle(E_x^s,E_x^u)\geq\frac{1}{L}.
$$
\end{theorem}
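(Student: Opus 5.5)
The plan is to fix a point $x\in M$ and a vector $w\in T_xM$ with $\|w\|=1$, and to show $\|\pi_x^s w\|\le L$; the bound for $\pi^u_x$ follows by symmetry (apply the argument to $f^{-1}$, or run it with the roles of the two bundles exchanged). The idea is to build a $\delta$-pseudo-orbit with a single jump of size about $\delta$ in direction $w$ at time $0$. Concretely, let $x_n=f^n(x)$ for $n<0$, and $x_n=f^n(y_\delta)$ for $n\ge 0$, where $y_\delta=\exp_x(\delta w)$. The distances are $d(f(x_{-1}),x_0)=d(x,y_\delta)\le\delta$, and every other step is exact. So this is a $\delta$-pseudo-orbit, and Lipschitz shadowing yields $z_\delta$ with $d(f^n(z_\delta),x_n)\le L\delta$ for all $n$.

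Next I pass to a linearization as $\delta\to0$. Since $z_\delta$ is $L\delta$-close to $x$, I write $z_\delta=\exp_x(\delta v_\delta)$ with $\|v_\delta\|\le L$ (up to $o(1)$), and take a subsequential limit $v_\delta\to v$. For each fixed $n\ge0$, $f^n$ is $C^1$ near $x$. Dividing the estimate $d(f^n(z_\delta),f^n(y_\delta))\le L\delta$ by $\delta$ and letting $\delta\to0$ gives $\|Df^n_x(v-w)\|\le L$ for all $n\ge0$ (in fact $\le L+1$ is what comes directly; see below). Similarly, the negative times give $\|Df^{-n}_x v\|\le L$ for all $n\ge0$. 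Uniform hyperbolicity then forces $v-w\in E^s_x$: a vector with nonzero unstable component grows like $C^{-1}\lambda^{-n}$ times that component's norm under $Df^n$, so it cannot be bounded. Likewise $v\in E^u_x$. Hence $v=\pi^u_x w$ and $v-w=-\pi^s_x w$, so $\|\pi^u_x w\|=\|v\|\le L$.

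Applying the same construction with the jump placed the other way gives $\|\pi^s_x w\|\le L$. Here the orbit of $x$ is used for $n\ge0$ and $f^n(y_\delta)$ for $n<0$, adjusted so that the jump occurs at one step. Alternatively, one can compare with the vector $w$ itself, whose limit shadow $v'$ lies in $E^s_x$ with $\|v'\|\le L$, giving $v'=\pi^s_x w$.

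The angle bound is then standard linear algebra. If $e_s\in E^s_x$ and $e_u\in E^u_x$ are unit vectors, then $\|\pi^s_x\|\ge 1/\operatorname{dist}(e_s,E^u_x)$. To see this, decompose a vector realizing $\operatorname{dist}(e_s,E^u_x)=\sin\angle$: its stable projection is $e_s$. This gives $\sin\angle(E^s_x,E^u_x)\ge 1/\|\pi^s_x\|\ge 1/L$.

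The main obstacle is the bookkeeping that makes the constant exactly $L$ rather than $L+1$ or $L(1+o(1))$. The issue is where the shadow is measured from, i.e. whether we compare with $x$ or with $y_\delta$ at time $0$. I would resolve it by measuring $v$ against the base point of each half-orbit separately, as above: bounds on $v$ come from the $x$-side and bounds on $v-w$ come from the $y_\delta$-side. Each is then directly $\le L$ with no triangle inequality, and the $o(1)$ errors from $\exp_x$ and the $C^1$ Taylor expansion of $f^{\pm n}$ at fixed $n$ vanish in the limit. Noncompactness causes no trouble because everything happens at fixed $x$ and fixed $n$.
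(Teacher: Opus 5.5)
\textbf{Gap: the bound $\|\pi^u_x\|\le L$ is not established.}

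Your second construction is correct. It uses the orbit of $y_\delta$ for $n<0$ and the orbit of $x$ for $n\ge0$. Time $0$ lies on the $x$-half, so $d(z_\delta,x)\le L\delta$. The limit $v'$ then satisfies $\|Df^n_xv'\|\le L$ for all $n\ge0$, including $n=0$, and $\|Df^{-n}_x(v'-w)\|\le L$ for $n\ge1$. Hence $v'=\pi^s_xw$ and $\|\pi^s_xw\|\le L$. The paper gets the same bound from your first construction by reading off $\|w-v\|\le L$ at time $0$.

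The problem is in the first construction. There $x_0=y_\delta$, not $x$, so the time-$0$ estimate is $d(z_\delta,y_\delta)\le L\delta$. The assertion that $z_\delta$ is $L\delta$-close to $x$, which is the case $n=0$ of $\|Df^{-n}_xv\|\le L$, is not available; a priori you only have $\|v_\delta\|\le L+1$. What you actually obtain is:
\begin{itemize}
\item $\|Df^{-n}_xv\|\le L$ only for $n\ge1$;
\item $\|v-w\|\le L$ at $n=0$.
\end{itemize}
The second of these again bounds $\|\pi^s_xw\|=\|w-v\|$. For $\|\pi^u_xw\|=\|v\|$ you only get $L+1$ from the triangle inequality, or $L\,\|Df_{f^{-1}(x)}\|$ from the $n=1$ estimate.

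This is structural. With a single jump from time $-1$ to time $0$, the time-$0$ term belongs to the forward half-orbit, and the limiting displacement from that half lies in $E^s$. So both of your constructions bound the same projection.

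Your fallback ``apply the argument to $f^{-1}$'' does not help either. A $\delta$-pseudo-orbit of $f^{-1}$ is not a $\delta$-pseudo-orbit of $f$. Reversing your sequence produces an $f$-jump $d(f(y_\delta),f(x))\approx\delta\|Df_xw\|$, which yields only $\|\pi^u_xw\|\le L\|Df_xw\|$. On a noncompact $M$ there is no uniform control of $\|Df\|$ to absorb this factor. Relatedly, your remark that the forward estimate yields only $L+1$ is misplaced. The forward bound $\|Df^n_x(v-w)\|\le L$ is direct; the $+1$ enters only through $d(z_\delta,x)$.

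\textbf{The missing ingredient} is the linear-algebra identity the paper uses, Lemma~\ref{projection norm}. For nonzero complementary subspaces of an inner product space, $\|\pi^E\|=\|\pi^F\|=1/\sin\angle(E,F)$; equivalently, $\|P\|=\|I-P\|$ for every idempotent $P\neq0,I$.

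With this identity, $\|\pi^u_x\|=\|\pi^s_x\|\le L$. If one bundle is zero, the two projections have norms $0$ and $1$, and $1\le L$ because $L\ge1$.

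The same identity is needed to match the angle statement. Your computation bounds $\inf_{e\in E^s_x,\ \|e\|=1}\operatorname{dist}(e,E^u_x)=1/\|\pi^s_x\|$. The paper's $\sin\angle(E^s_x,E^u_x)$ takes the infimum over unit vectors of $E^u_x$, and that quantity equals $1/\|\pi^u_x\|$.
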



The proof is pointwise and does not require a uniform coordinate radius.

However, uniform transversality does not restore an expansivity constant for the Riemannian distance. A warped product construction gives the following example.

\begin{theorem}\label{ex:warped}
There exists a connected complete Riemannian manifold of finite volume and bounded sectional curvature carrying an Anosov diffeomorphism with the Lipschitz shadowing property, with orthogonal invariant bundles, but which is not expansive with respect to its Riemannian distance.
\end{theorem}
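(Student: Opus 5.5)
We construct the example as a warped product $M=\mathbb R\times N$, where $N=\mathbb T^2$ and the metric is
$$
ds^2=dt^2+e^{2\phi(t)}\,\|\cdot\|_{\mathbb T^2}^2 .
$$
Here $\|\cdot\|_{\mathbb T^2}$ is a flat metric on $\mathbb T^2$ adapted to a hyperbolic toral automorphism $B$, so that the eigendirections of $B$ are orthogonal. This is arranged by choosing a linear change of coordinates in which $B$ is symmetric, for instance $B=\begin{pmatrix}2&1\\1&1\end{pmatrix}$ with the standard metric. The warping function $\phi$ is chosen with $\phi(t)\to-\infty$ as $|t|\to\infty$ and $\int e^{2\phi}\,dt<\infty$, for example $\phi(t)=-|t|$ smoothed near $0$. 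This gives finite volume, and completeness holds because the $t$-direction is a complete line with the unwarped metric $dt^2$. For a warped product $B\times_{e^\phi}F$ with flat fiber, the sectional curvatures are combinations of $-\phi''-(\phi')^2$ and $-(\phi')^2$. These are bounded once $\phi'$ and $\phi''$ are bounded, and $\phi(t)=-\sqrt{1+t^2}$ works.

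The diffeomorphism is $f(t,y)=(h(t),B y)$. The base map $h$ must supply a hyperbolic direction while keeping the fibers at comparable sizes. The most natural choice is $h(t)=t$, but then the $t$-direction is neutral and $f$ is not Anosov. The plan is therefore to take instead $N=\mathbb T^2$ and let $f$ also act on $t$ by a translation $h(t)=t+1$, and to modify the warped metric so that the $\partial_t$ direction becomes part of the splitting. Translation is not hyperbolic either, so the correct construction is to make the warping itself produce hyperbolicity. We let $f(t,y)=(t+1,By)$ on $\mathbb R\times\mathbb T^2$, with stable direction $E^s=\mathbb R\,\partial_t\oplus E^s_B$ and unstable direction $E^u_B$, using a metric $dt^2$-component weighted by a conformal factor $e^{-2ct}$-type. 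The $t$-direction is then contracted under $f$ in the Riemannian norm. After a reparametrization $r=e^{-ct}$ this amounts to a half-line model, and we must check completeness there.

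The key verifications, in order, are the following.
\begin{enumerate}
\item The splitting is $Df$-invariant and the bundles are orthogonal, since the metric is a diagonal product of eigen-directions.
\item The uniform rates follow by computing $\|Df^n v\|$ in each coordinate, using the invariance of the weights under translation by $1$ up to a fixed factor.
\item Lipschitz shadowing is obtained by writing a pseudo-orbit in coordinates and solving the linear difference equation componentwise. Because the bundles are orthogonal and the rates are uniform, the standard exponential-dichotomy formula gives a shadowing orbit with $L$ depending only on $C$ and $\lambda$, in line with Theorem~\ref{thm:shadowing-angle}. The point needing care is the comparison of the Riemannian distance with the coordinate displacement. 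We control this by the fact that the metric coefficients vary by bounded ratios over unit coordinate distances, which follows from the bound on $\phi'$.
\item Non-expansivity is shown directly. The fibers $\{t\}\times\mathbb T^2$ have diameter of order $e^{\phi(t)}\to0$, so for any $\epsilon>0$ we select two distinct points whose orbits stay within $\epsilon$. Take two points on the same $B$-periodic orbit structure placed far out, where all fibers visited by the orbit are $\epsilon$-small. Since $t\mapsto t+1$ traverses all of $\mathbb R$, we need instead that the orbit remains in the thin region for all $n\in\mathbb Z$.
\end{enumerate}

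This last requirement is the main obstacle. It forces the base dynamics to keep orbits near an end, so $h$ should fix the end, for instance by choosing the base to be a half-cusp where the $t$-action is contracting towards $+\infty$. The first attempt is $f(t,y)=(t+1,By)$ with fibers shrinking as $t\to+\infty$ and with $t\to-\infty$ also cusped. Two points $(t_0,y)$ and $(t_0,y')$ with $t_0$ large then have nearby forward orbits, and their backward orbits pass through the fat part only if $\phi$ is large near $0$. Making $\sup e^{\phi}$ small everywhere, while having the Anosov constants independent of the scale, gives non-expansivity at every scale $\epsilon$ by rescaling along $t$. This is the step to check most carefully.
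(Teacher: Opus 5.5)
\textbf{There is a genuine gap: your proposal never produces one manifold that is both complete and Anosov, and the obstruction is the base dynamics you settle on.}

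With base metric $dt^2$, the map $t\mapsto t+1$ gives $\|Df^n\partial_t\|=1$ for all $n$, so $f$ is not Anosov. Your repair is a factor $e^{-2ct}$ on $dt^2$. This puts the end $t\to+\infty$ at finite distance $\int_0^\infty e^{-ct}\,dt=1/c$. Your own substitution $r=e^{-ct}$ shows the base becomes the open half-line $(0,\infty)$ with metric $c^{-2}dr^2$, so completeness fails.

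No other metric on the line can rescue a translation. Suppose $\partial_t$ is uniformly contracted (or expanded) in a complete base metric, as your plan requires. Then some power $h^N$ (or $h^{-N}$) is a $\tfrac12$-Lipschitz self-map of a complete line, so it has a fixed point, which $t\mapsto t+N$ does not.

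Your warping also fails on its own terms. The fiber weight $e^{\phi}$ with $\phi=-\sqrt{1+t^2}$ drops by a factor tending to $e^{-1}$ per step. Since $\mu=(3+\sqrt5)/2<e$, vectors in $E^u_B$ over large $t$ are contracted. You never impose the needed compatibility condition: the per-step weight ratio must stay below $\mu$.

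Step 4 is not an argument. On one fixed manifold you cannot make $\sup e^{\phi}$ small ``by rescaling along $t$''. You need, for each $\epsilon>0$, two distinct points of the same manifold whose orbits stay $\epsilon$-close for every $n\in\mathbb Z$.

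\textbf{The idea the paper uses.} Take the base $\mathbb R^2$ with the Euclidean metric and the saddle $H(u,v)=(2u,v/2)$. This map is hyperbolic for a complete metric, and the orbit $H^n(T,T)=(2^nT,2^{-n}T)$ stays at distance at least $\sqrt2\,T$ from the origin for all $n$. Because $H$ changes $\|y\|$ by a factor up to $2$, the weight must decay only polynomially: $\omega(y)=(1+\|y\|^2)^{-\alpha/2}$, which gives $2^{-\alpha}\le\omega(Hy)/\omega(y)\le 2^{\alpha}$. Then:
\begin{itemize}
\item $2^\alpha<\mu$ gives the Anosov estimate in the torus directions;
\item $\alpha>1$ gives finite volume over a planar base;
\item $\mu>2$ makes the window $1<\alpha<\log_2\mu$ nonempty.
\end{itemize}
Non-expansivity is then immediate from $d_g\bigl(F^n(x,T,T),F^n(x',T,T)\bigr)\le D(1+2T^2)^{-\alpha/2}$.

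A line base is not fatal in itself. For instance, $s\mapsto s/2$ with $\omega(s)=(1+s^2)^{-\alpha/2}$ and $\tfrac12<\alpha<\log_2\mu$ also works. But then every orbit passes over the fat fiber at $s=0$, so non-expansivity has to come from a different mechanism. One option is two points on a common $B$-stable leaf, whose backward separation is damped by the shrinking fibers.

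\textbf{Your shadowing step is also too optimistic.} Uniform rates and orthogonal bundles do not give shadowing on a noncompact manifold. Theorem~\ref{ex:no-shadowing} is an example with $C=1$, $\lambda=1/2$, orthogonal bundles, and no shadowing at all. The paper needs two ingredients:
\begin{itemize}
\item the comparison $\omega(y')\,d_0(x,x')\le e^{\beta d}d$, with $\beta=\sup\|\nabla\log\omega\|$;
\item a Green series weighted by $W_n=\omega(\zeta_n)$, which converges with ratio $2^\alpha/\mu$.
\end{itemize}
The weighting is essential because, where the fiber is thin, torus errors can be as large as the diameter of the flat torus.
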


A related loss of expansivity through shrinking metrics occurs for the non-autonomous Anosov family in \cite[Example~2.4]{ma}. In contrast,
Theorem~\ref{ex:warped} gives a single autonomous diffeomorphism on a connected complete manifold, with orthogonal invariant bundles and uniform Lipschitz shadowing.

The construction is a warped product whose torus fibers shrink along suitable escaping orbits. Uniform control of the successive fiber weights is enough to prove Lipschitz shadowing by a weighted Green series.

\section{Topological expansivity}\label{sec:expansivity}

\noindent
Let $TM=E^s\oplus E^u$ be the Anosov splitting. Let
$\pi_x^s:T_xM\to E_x^s$ and $\pi_x^u:T_xM\to E_x^u$ denote the projections
associated with this splitting. There are $C>0$ and $\lambda\in(0,1)$ such
that
$$
\|Df^n|_{E^s_x}\|\leq C\lambda^n,
\qquad
\|Df^{-n}|_{E^u_x}\|\leq C\lambda^n
$$
for every $x\in M$ and $n\geq0$. Choose $N\geq1$ so that
$a:=C\lambda^N<1$, and set $F=f^N$. Then
$$
\|DF_xv^s\|\leq a\|v^s\|,
\qquad
\|DF^{-1}_{F(x)}v^u\|\leq a\|v^u\|
$$
for $v^s\in E^s_x$ and $v^u\in E^u_{F(x)}$. For
$v=v^s+v^u\in T_xM$, define the max splitting norm by
$\|v\|_{*,x}=\max\{\|v^s\|,\|v^u\|\}$. The continuity of the splitting makes
this a continuous norm on $TM$. Define the stable and unstable cones by
$$
C^u_x=\{v=v^s+v^u:\|v^s\|\leq\|v^u\|\},
\qquad
C^s_x=\{v=v^s+v^u:\|v^u\|\leq\|v^s\|\}.
$$
Every nonzero vector belongs to at least one of these cones.

If $v=v^s+v^u\in C^u_x$, then $\|v\|_{*,x}=\|v^u\|$ and
$$
\|DF_xv^s\|\leq a\|v^s\|\leq a\|v^u\|,
\qquad
\|DF_xv^u\|\geq a^{-1}\|v^u\|.
$$
Consequently,
$$
DF_xv\in C^u_{F(x)}
\quad\text{and}\quad
\|DF_xv\|_{*,F(x)}\geq a^{-1}\|v\|_{*,x}.
$$
The same argument for $F^{-1}$ gives
$DF^{-1}_x(C^s_x)\subset C^s_{F^{-1}(x)}$ and
$$
\|DF^{-1}_xv\|_{*,F^{-1}(x)}
\geq a^{-1}\|v\|_{*,x},
\qquad v\in C^s_x.
$$

Choose $1<\mu<a^{-1}$. We transfer these estimates to nearby points. For
$x\in M$, define
$\Phi_x(v)=\exp_{F(x)}^{-1}\bigl(F(\exp_xv)\bigr)$ wherever the exponential
coordinates are defined. Then $\Phi_x(0)=0$ and $D\Phi_x(0)=DF_x$.
Similarly, set
$\Psi_x(v)=\exp_{F^{-1}(x)}^{-1}\bigl(F^{-1}(\exp_xv)\bigr)$, so that
$\Psi_x(0)=0$ and $D\Psi_x(0)=DF^{-1}_x$. Choose $\eta>0$ so that
$a+\eta<a^{-1}-\eta$ and $a^{-1}-\eta>\mu$. All operator norms denoted by
$\|\cdot\|_*$ below are induced by the splitting norms on the corresponding
source and target tangent spaces. By the continuity of $D\Phi_x$ at zero,
for each $x\in M$ and all sufficiently small $v$ we may require
$$
\|D\Phi_x(tv)-D\Phi_x(0)\|_*<\eta,
\qquad 0\leq t\leq1.
$$
Since
$$
\Phi_x(v)-DF_xv
=\int_0^1\bigl(D\Phi_x(tv)-D\Phi_x(0)\bigr)v\,dt,
$$
we obtain
$$
\|\Phi_x(v)-DF_xv\|_{*,F(x)}\leq\eta\|v\|_{*,x}.
$$
If $v\in C^u_x$, it follows that
$$
\|\pi^s_{F(x)}\Phi_x(v)\|\leq(a+\eta)\|v^u\|,
\qquad
\|\pi^u_{F(x)}\Phi_x(v)\|\geq(a^{-1}-\eta)\|v^u\|.
$$
Therefore
$$
\Phi_x(v)\in C^u_{F(x)},
\qquad
\|\Phi_x(v)\|_{*,F(x)}\geq\mu\|v\|_{*,x}.
$$
Applying the same argument to $\Psi_x$ shows that, for sufficiently small
$v\in C^s_x$,
$$
\Psi_x(v)\in C^s_{F^{-1}(x)},
\qquad
\|\Psi_x(v)\|_{*,F^{-1}(x)}\geq\mu\|v\|_{*,x}.
$$

The continuity of $DF$ and $DF^{-1}$, the smooth dependence of the
exponential map, and the continuity of the splitting show that these
strict inequalities persist on an open neighborhood of the zero section
in $TM$. Intersecting this neighborhood with a sufficiently small
tubular neighborhood of the zero section, we may assume that
$$
(x,v)\longmapsto(x,\exp_xv)
$$
is a diffeomorphism onto an open neighborhood $\Omega$ of the diagonal in
$M\times M$. We may also require that, for every $(x,y)\in\Omega$, the
vector $v=\exp_x^{-1}(y)$ is defined and satisfies
$\|v\|_{*,x}<1$.

If $v$ lies in $C^u_x$, the forward estimate for $\Phi_x$
applies; if $v$ lies in $C^s_x$, the backward estimate for $\Psi_x$ applies.

Equip $M\times M$ with the product metric
$D((x,y),(x',y'))=d(x,x')+d(y,y')$, and set
$K=(M\times M)\setminus\Omega$. If $K=\varnothing$, take
$\delta\equiv1$. Otherwise, define
$$
q(x)=\operatorname{dist}_D((x,x),K),
\qquad
\delta(x)=\frac12\min\{1,q(x)\}.
$$
Because $\Omega$ is open and contains the diagonal, $q(x)>0$ for every
$x\in M$. The distance to a closed set is continuous, so
$\delta:M\to(0,\infty)$ is continuous. If $d(x,y)\leq\delta(x)$, then
$$
D\bigl((x,x),(x,y)\bigr)
=d(x,y)
\leq\delta(x)
\leq\frac{q(x)}2
<q(x).
$$
Thus $(x,y)\notin K$, and therefore $(x,y)\in\Omega$. The same conclusion
is immediate when $K=\varnothing$.

Suppose now that $x,y\in M$ satisfy
$$
d(F^n(x),F^n(y))\leq\delta(F^n(x))
\qquad\text{for every }n\in\mathbb Z.
$$
Set $x_n=F^n(x)$, $y_n=F^n(y)$, and
$v_n=\exp_{x_n}^{-1}(y_n)$. Then $\|v_n\|_{*,x_n}<1$ for every
$n\in\mathbb Z$.

Suppose that $x\neq y$. Then $v_0\neq0$, so
$v_0\in C^u_x\cup C^s_x$. If $v_0\in C^u_x$, the definitions give
$$
v_{n+1}
=\exp_{F(x_n)}^{-1}\bigl(F(\exp_{x_n}(v_n))\bigr)
=\Phi_{x_n}(v_n).
$$
Induction with the forward cone estimate yields $v_n\in C^u_{x_n}$ and
$$
\|v_n\|_{*,x_n}\geq\mu^n\|v_0\|_{*,x},
\qquad n\geq0.
$$
This contradicts $\|v_n\|_{*,x_n}<1$. If $v_0\in C^s_x$, then
$v_{n-1}=\Psi_{x_n}(v_n)$. The backward cone estimate gives
$v_{-n}\in C^s_{x_{-n}}$ and
$$
\|v_{-n}\|_{*,x_{-n}}\geq\mu^n\|v_0\|_{*,x},
\qquad n\geq0,
$$
which gives the same contradiction. Therefore $x=y$, and $F$ is topologically expansive.

Finally, suppose that $d(f^n(x),f^n(y))\leq\delta(f^n(x))$ for every
$n\in\mathbb Z$. For every $k\in\mathbb Z$, we then have
$d(F^k(x),F^k(y))=d(f^{Nk}(x),f^{Nk}(y))\leq\delta(F^k(x))$. The
topological expansivity of $F$ gives $x=y$, so $f$ is topologically expansive.
\qed

\section{Failure of shadowing}\label{sec:shadowing}

\noindent
Let $P(s,u)=su$. Choose a smooth function $\rho:\mathbb R\to[0,1]$ such that
$\rho=1$ on $[-2,-1]$ and
$\operatorname{supp}\rho\subset(-5/2,-1/2)$. For $n\geq1$, set
$t_n=2^{-n}$, $R_n=2^{4n}$, $c_n=R_n^2-t_n^2$, and
$H_n=2\sqrt2\,(R_n+1)$. Define
$$
w(p)=1+\sum_{n=1}^{\infty}H_n\rho(p-c_n),
\qquad
g=w(su)^2(ds^2+du^2).
$$
The supports of the summands are pairwise disjoint and tend to infinity, so
the sum is locally finite and $w$ is smooth.

Since $w\geq1$, the metric $g$ dominates the Euclidean metric. A
$g$-Cauchy sequence is therefore Euclidean Cauchy. Near its Euclidean limit,
$w$ is bounded, so Euclidean convergence also implies $g$-convergence. Thus
$g$ is complete.

The function $P$ is invariant under $A$. Consequently, the conformal factor
is constant along every orbit. With $E^s=\mathbb R(1,0)$ and
$E^u=\mathbb R(0,1)$, for every $k\geq0$,
$$
\|DA^kv^s\|_{g,A^kx}=2^{-k}\|v^s\|_{g,x},
\qquad
\|DA^{-k}v^u\|_{g,A^{-k}x}=2^{-k}\|v^u\|_{g,x}.
$$
The bundles are orthogonal because $g$ is conformal to the Euclidean metric.
Hence $A$ is Anosov with $C=1$ and $\lambda=1/2$.

To show that shadowing fails, put
$$
a_n=(R_n,R_n),
\qquad
q_n=(R_n+t_n,R_n-t_n),
\qquad
r_n=(R_n,R_n-t_n).
$$
The segment $\gamma_n(\tau)=(R_n+\tau,R_n-\tau)$,
$0\leq\tau\leq t_n$, joins $a_n$ to $q_n$. Along this segment,
$P(\gamma_n(\tau))=R_n^2-\tau^2\in[c_n,R_n^2]$, so $w=1$ and the
$g$-length of the segment is $\sqrt2\,t_n$. Since $g$ dominates the
Euclidean metric,
\begin{equation}\label{eq:small-jump}
d_g(a_n,q_n)=\sqrt2\,t_n\longrightarrow0.
\end{equation}

On the other hand, $P(r_n)=R_n^2-R_nt_n<c_n-2$, while $w=1+H_n$ on
$I_n=[c_n-2,c_n-1]$. We claim that
\begin{equation}\label{eq:barrier-distance}
d_g(r_n,q_n)\geq1.
\end{equation}
Suppose that a rectifiable curve $\gamma$ from $r_n$ to $q_n$ had $g$-length
less than $1$. Because $g$ dominates the Euclidean metric, the curve would remain in the Euclidean unit ball about $r_n$. Thus, if $(s,u)\in \gamma$, then
\[
|s - R_n| \le 1 \quad \text{and} \quad |u - (R_n - t_n)| \le 1.
\]
Therefore,
\[
s \le R_n+1, \qquad u \le R_n - t_n + 1 \le R_n+1,
\]
and, 

\[
\|\nabla P(s,u)\|
=\sqrt{s^2+u^2}
\leq \sqrt{2}\,(R_n+1)
=:K_n.
\]

The function $P\circ\gamma$ starts below $c_n-2$ and ends at $c_n$.
Hence some subarc of $\gamma$ traverses the interval
$I_n=[c_n-2,c_n-1]$. On the Euclidean unit ball above,
$$
\|\nabla P\|\leq K_n,
$$
so this subarc has Euclidean length at least $1/K_n$. Along it,
$P-c_n\in[-2,-1]$, and therefore $\rho(P-c_n)=1$. Its $g$-length is
thus at least
$$
\frac{1+H_n}{K_n}=2+\frac1{K_n}>2,
$$
contradicting the assumption that $\gamma$ has $g$-length less than $1$.
Hence
$$
d_g(r_n,q_n)\geq1.
$$


For each $n$, define
$$
x_k^{(n)}=
\begin{cases}
A^ka_n,&k<0,\\
A^kq_n,&k\geq0.
\end{cases}
$$
This sequence has only one error, from time $-1$ to time $0$, and its size
tends to zero by \eqref{eq:small-jump}. Suppose that the orbit of
$y=(\alpha,\beta)$ $\frac12$-shadows this sequence. Since $g$ dominates the
Euclidean metric, for $k<0$ we have
$2^{-k}|\alpha-R_n|<1/2$. Letting $k\to-\infty$ gives $\alpha=R_n$.
Similarly, for $k\geq0$,
$2^k|\beta-(R_n-t_n)|<1/2$, so $\beta=R_n-t_n$. Thus $y=r_n$. At time
zero, \eqref{eq:barrier-distance} gives $d_g(y,q_n)\geq1$, a contradiction.

Given $\delta>0$, choose $n$ so large that $\sqrt2\,t_n<\delta$. Then
$(x_k^{(n)})_{k\in\mathbb Z}$ is a $\delta$-pseudo-orbit that cannot be
$\frac12$-shadowed. Therefore $A$ does not have the shadowing property.
\qed

\begin{remark}
The metric above still makes $A$ expansive. Indeed, since
$d_g\geq d_{\mathrm{Eucl}}$, every nonzero displacement under
$$
A^n(s,u)=(2^{-n}s,2^nu)
$$
becomes arbitrarily large in one of the two time directions. Thus
Theorem~\ref{ex:no-shadowing} loses shadowing without losing metric
expansivity; Theorem~\ref{ex:warped} below gives the converse separation.
\end{remark}

\section{Lipschitz shadowing and metric expansivity}

\subsection{Lipschitz shadowing and uniform transversality}
\label{subsec:uniform-transversality}

For complementary subspaces $E$ and $F$ of an inner product space, define
$$
\sin\angle(E,F)=\inf_{\substack{v \in F\\ v \neq 0}} \frac{\operatorname{dist}(v,E)}{\|v\|}
=\inf_{\substack{v\in F\\\|v\|=1}}\operatorname{dist}(v,E).
$$
If either subspace is zero, set their angle equal to $\pi/2$.

\begin{lemma}\label{projection norm}
Let \(E, F \) be nonzero complementary subspaces of a finite-dimensional inner product space and let \(\pi^E\) and \(\pi^F\) be the associated complementary projections. Then
\[
\|\pi^F\|=\|\pi^E\| = \frac{1}{\sin \angle (E,F)}.
\]
\end{lemma}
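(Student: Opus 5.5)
We prove the two equalities separately: first $\|\pi^E\|=1/\sin\angle(E,F)$ directly from the definition, then $\|\pi^F\|=\|\pi^E\|$ by a symmetry argument. Write $V=E\oplus F$ and let $\theta$ denote $\sin\angle(E,F)=\inf_{v\in F,\|v\|=1}\operatorname{dist}(v,E)$. This is strictly positive, because the unit sphere of $F$ is compact and disjoint from the closed subspace $E$.

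\emph{Step 1: $\|\pi^F\|=1/\theta$.} For $w\in V$ with $\pi^F w\neq 0$, write $w=e+f$ with $e\in E$ and $f\in F$. Then
\[
\|w\|=\|f-(-e)\|\geq \operatorname{dist}(f,E)\geq \theta\|f\|=\theta\|\pi^Fw\|,
\]
so $\|\pi^F\|\leq 1/\theta$. For the reverse inequality, choose a unit vector $f\in F$ and $e\in E$ with $\|f-e\|=\operatorname{dist}(f,E)$, which is possible since $E$ is finite-dimensional. Setting $w=f-e$, we get $\pi^Fw=f$ and $\|w\|=\operatorname{dist}(f,E)$. Taking the infimum over $f$ (compactness lets us attain it) gives $\|\pi^F\|\geq 1/\theta$.

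\emph{Step 2: $\|\pi^E\|=\|\pi^F\|$.} This is the main point, since the angle is defined asymmetrically from $F$ to $E$. We use the identity $\pi^E=I-\pi^F$ together with the classical fact that a nontrivial idempotent $P$ on an inner product space satisfies $\|P\|=\|I-P\|$. Here $P\neq 0,I$ because both $E$ and $F$ are nonzero. To prove the fact, we first reduce to dimension two. The key observation is that $\|Pw\|/\|w\|$ and $\|(I-P)w\|/\|w\|$ can be compared on two-dimensional subspaces of the form $\operatorname{span}\{e,f\}$ with $e\in E$ and $f\in F$. Since every $w$ decomposes as $e+f$ and lies in such a span, both operator norms are suprema over these planes, and each such plane is invariant under $P$. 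So it suffices to show, for each plane $\operatorname{span}\{e,f\}$ with $e,f$ unit vectors, that the restricted norms of $P$ and $I-P$ coincide.

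In the plane, if $e$ and $f$ are orthogonal, both restricted norms are equal to $1$. Otherwise let $\phi$ be the angle between the lines $\mathbb{R}e$ and $\mathbb{R}f$, with $\phi\in(0,\pi/2]$; the lines are distinct because $E\cap F=0$. A direct computation shows that the projection onto either line along the other has norm $1/\sin\phi$, which is symmetric in $e$ and $f$. Taking the supremum over planes gives $\|\pi^E\|=\|\pi^F\|$.

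As an alternative to the planar reduction, one can cite the known identity $\|P\|=\|I-P\|$ for idempotents with $P\neq 0,I$ (Kato, or Szyld's survey). Alternatively, one can argue via singular values: $P$ and $(I-P)^*$ are unitarily similar on suitable subspaces. Either way, the only genuinely nontrivial step is this symmetry. Step 1 is a routine distance computation.
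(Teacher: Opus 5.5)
Your proof is correct. Step 1 is the paper's argument almost verbatim: the upper bound comes from $\operatorname{dist}(f,E)\le\|e+f\|$, and the lower bound from a unit $f\in F$ with $w=f-e$, where $e$ is the nearest point of $E$.

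The two proofs differ only in how $\|\pi^E\|=\|\pi^F\|$ is obtained. The paper simply asserts that the angle is symmetric, $\sin\angle(E,F)=\sin\angle(F,E)$, and reruns Step 1 with $E$ and $F$ interchanged. You instead prove $\|P\|=\|I-P\|$ for $P=\pi^E$. Every $w=e+f$ lies in the $P$-invariant plane $\operatorname{span}\{e,f\}$, so both operator norms are suprema of restricted norms over the same family of planes. On each plane, $P$ and $I-P$ are the two oblique projections between the lines $\mathbb{R}e$ and $\mathbb{R}f$, and they have equal norms.

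By Step 1, $\|\pi^E\|=\|\pi^F\|$ is equivalent to the symmetry of the angle. So your planar argument actually supplies the justification that the paper omits. The paper's version is shorter but relies on a standard fact it leaves unproved. That fact can also be checked via $\inf_{v\in F,\|v\|=1}\operatorname{dist}(v,E)^2=1-\|P_EP_F\|^2$, where $P_E,P_F$ are the orthogonal projections, together with $\|P_EP_F\|=\|(P_EP_F)^*\|=\|P_FP_E\|$.

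To make your ``direct computation'' explicit, apply Step 1 inside the plane. For unit $e,f$, both planar projection norms equal $1/\sqrt{1-\langle e,f\rangle^2}$, which is visibly symmetric in $e$ and $f$.

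Two cosmetic points. Your opening sentence announces $\|\pi^E\|=1/\sin\angle(E,F)$ as the first step, but Step 1 actually proves this for $\pi^F$. The closing remarks about Kato and singular values are also unnecessary, since the planar argument is complete on its own.
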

\begin{proof}
Write \(v = e + f\) with \(e \in E\) and \(f \in F\). Since
$$ \operatorname{dist}(f,E)\leq \|e+f\|=\|v\|, $$
the definition of the angle gives
$$ \|\pi^Fv\|=\|f\| \leq \frac{\|v\|}{\sin\angle(E,F)}. $$
Hence
$$ \|\pi^F\|\leq\frac1{\sin\angle(E,F)}. $$
Conversely, by compactness of the unit sphere in \(F\), choose \(f\in F\) with \(\|f\|=1\) such that
$$ \operatorname{dist}(f,E)=\sin\angle(E,F). $$
Let \(e\in E\) be the orthogonal projection of \(f\) onto \(E\), and set \(v=f-e\). Then
$$ \pi^Fv=f, \qquad \|v\|=\operatorname{dist}(f,E).$$
Thus
$$
\|\pi^F\|=\frac{1}{\sin\angle(E,F)}.
$$
Since the angle between two nonzero subspaces is symmetric, applying the
same argument with $E$ and $F$ interchanged gives
$$
\|\pi^E\|=\|\pi^F\|.
$$

\end{proof}


\begin{lemma}\label{lem:diff_distance}
Let $M$ be a Riemannian manifold, and let $G : M \to M$ be a $C^1$ map. Fix $p \in M$. Suppose $u_j, v_j \in T_pM$ with
$$
u_j \to u, \qquad v_j \to v \qquad (j\to\infty),
$$
and let $t_j > 0$ with $t_j \to 0$. Then
$$
\frac{d\bigl(G(\exp_p(t_j u_j)),\; G(\exp_p(t_j v_j))\bigr)}{t_j}
\longrightarrow \|DG_p(u-v)\|.
$$
\end{lemma}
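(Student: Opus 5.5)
The plan is to work entirely in a small neighbourhood of the point $G(p)$ and to compare the Riemannian distance there with the norm of a difference vector in normal coordinates. Put $a_j=G(\exp_p(t_ju_j))$, $b_j=G(\exp_p(t_jv_j))$ and $q=G(p)$. Both $a_j$ and $b_j$ converge to $q$, because $t_ju_j\to0$, $t_jv_j\to0$ and $G$ is continuous.

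Fix a normal neighbourhood of $q$. Set $\phi=\exp_q^{-1}$, defined on a geodesically convex ball $B$ around $q$, and put $H(w)=\phi\bigl(G(\exp_p w)\bigr)$ for $w$ in a small ball of $T_pM$. Then $H$ is $C^1$ with $H(0)=0$ and $DH(0)=DG_p$, since $D\exp_p(0)$ and $D\exp_q^{-1}(q)$ are both the identity. The argument rests on two estimates.

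First, I will use the standard fact that normal coordinates are almost isometric near the centre. For every $\varepsilon>0$ there is $r>0$ such that for $x,y\in B(q,r)$,
\[
(1-\varepsilon)\|\phi(x)-\phi(y)\|\le d(x,y)\le(1+\varepsilon)\|\phi(x)-\phi(y)\|.
\]
This holds because the metric coefficients in normal coordinates are continuous and equal the identity at the origin. On a small Euclidean ball they therefore lie between $(1\pm\varepsilon)^2$ times the Euclidean metric. The upper bound follows by integrating along the straight segment. The lower bound follows because any curve leaving a slightly larger ball is already long, while curves staying inside it have length at least $(1-\varepsilon)$ times their Euclidean length. This localization of the lower bound is the only delicate point.

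Second, I will show $H(t_ju_j)-H(t_jv_j)=t_jDG_p(u_j-v_j)+o(t_j)$ via the mean value integral
\[
H(t_ju_j)-H(t_jv_j)=\int_0^1 DH\bigl(t_j(v_j+s(u_j-v_j))\bigr)\,t_j(u_j-v_j)\,ds.
\]
The argument of $DH$ tends to $0$ uniformly in $s$, because $u_j$ and $v_j$ are bounded. By continuity of $DH$ at $0$ the integrand is therefore $t_j\bigl(DG_p+o(1)\bigr)(u_j-v_j)$. Dividing by $t_j$ gives
\[
\frac{\|H(t_ju_j)-H(t_jv_j)\|}{t_j}\to\|DG_p(u-v)\|.
\]

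Combining the two estimates, the ratio $d(a_j,b_j)/t_j$ lies eventually between $(1-\varepsilon)$ and $(1+\varepsilon)$ times a quantity converging to $\|DG_p(u-v)\|$. Since $\varepsilon$ is arbitrary, the limit is $\|DG_p(u-v)\|$. The case $u=v$ is included, with limit $0$.
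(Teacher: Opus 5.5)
Your proof is correct and follows the same route as the paper. The paper also writes $G$ in normal coordinates at $p$ and $G(p)$, expands $\widetilde G(t_ju_j)-\widetilde G(t_jv_j)=t_jD\widetilde G_0(u_j-v_j)+o(t_j)$, and uses that the Riemannian distance agrees with the coordinate distance to first order. You make explicit what the paper leaves implicit, namely the mean value integral and the comparison of $d$ with $\|\phi(x)-\phi(y)\|$. One small correction: in the lower bound, the outer ball must have radius at least $2r$, not merely slightly larger. Then any curve leaving it has length greater than $2r>\|\phi(x)-\phi(y)\|$, which is what the argument needs.
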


\begin{proof}
Choose normal coordinates at $p$ and $G(p)$, and write $\widetilde G$
for the coordinate representation of $G$. Then
$$
\widetilde G(t_j u_j)-\widetilde G(t_j v_j)=
t_jD\widetilde G_0(u_j-v_j)+o(t_j).
$$
Since the Riemannian distance agrees with the Euclidean distance to first
order in normal coordinates, and since $u_j\to u$ and $v_j\to v$, it follows
that
$$
\frac{d\bigl(G(\exp_p(t_j u_j)),G(\exp_p(t_j v_j))\bigr)}{t_j}
\longrightarrow \|DG_p(u-v)\|.
$$
\end{proof}


\begin{proof}[Proof of Theorem~\ref{thm:shadowing-angle}]
Fix $p\in M$ and $w\in T_pM\setminus \{0\}$. Let $C>0$ and $\lambda\in(0,1)$ be Anosov constants for $f$. For sufficiently small $t>0$, set $q_t=\exp_p(tw)$ and define
$$
x_n^{(t)}=
\begin{cases}
f^n(p),&n<0,\\
f^n(q_t),&n\geq0.
\end{cases}
$$
The only inexact transition is from time $-1$ to time zero, and its error is
$d(p,q_t)=t\|w\|$. Let $\delta_0$ be a threshold associated with the
Lipschitz shadowing property, and choose $t$ with
$t\|w\|<\delta_0$. Lipschitz shadowing gives $z_t\in M$ such that
$$
d(f^n(z_t),x_n^{(t)})\leq Lt\|w\|
\qquad\text{for every }n\in\mathbb Z.
$$

At time zero,
$d(p,z_t)\leq d(p,q_t)+d(q_t,z_t)\leq(L+1)t\|w\|$. For small $t$,
define $c_t=t^{-1}\exp_p^{-1}(z_t)$. The vectors $c_t$ are bounded. By
finite dimensionality, after passing to a sequence $t_j\to0$ we may assume
that $c_{t_j}\to c\in T_pM$.

Fix $m\geq1$. Since $x_{-m}^{(t_j)}=f^{-m}(p)$ and
$z_{t_j}=\exp_p(t_jc_{t_j})$, the shadowing estimate at time $-m$ is
$$
d\bigl(f^{-m}(\exp_p(t_jc_{t_j})),f^{-m}(p)\bigr)
\leq Lt_j\|w\|.
$$
Apply  Lemma \ref{lem:diff_distance} with  $G=f^{-m}$,
$u_j=c_{t_j}$, and $v_j=0$ to obtain
$\|Df_p^{-m}c\|\leq L\|w\|$. Since $m\geq1$ was arbitrary, this bound holds
for every $m\geq1$. Write $c=c^s+c^u$ according to the Anosov splitting.
Then
$$
\|Df_p^{-m}c\|
\geq\frac{\|c^s\|}{C\lambda^m}-C\lambda^m\|c^u\|.
$$
The right-hand side is unbounded when $c^s\neq0$, so $c\in E_p^u$.

For every fixed $m\geq0$, the shadowing estimate gives
$$
d\bigl(f^m(\exp_p(t_jc_{t_j})),f^m(\exp_p(t_jw))\bigr)
\leq Lt_j\|w\|.
$$
The first-order distance limit with $G=f^m$, $u_j=c_{t_j}$, and $v_j=w$
gives $\|Df_p^m(c-w)\|\leq L\|w\|$. Writing $c-w=b^s+b^u$, the Anosov
estimates yield
$$
\|Df_p^m(c-w)\|
\geq\frac{\|b^u\|}{C\lambda^m}-C\lambda^m\|b^s\|.
$$
Since the left-hand side is bounded for every $m\geq0$, we must have $b^u=0$.
Hence $c-w\in E_p^s$.

Taking \(m=0\) in the preceding estimate gives
$$ \|c-w\|\leq L\|w\|. $$
Since \(c\in E_p^u\) and \(w-c\in E_p^s\), uniqueness of the Anosov splitting gives
$$ c=\pi_p^u w, \qquad w-c=\pi_p^s w. $$

Hence \(\|\pi_p^s w\|\leq L\|w\|,\)  and therefore \(\|\pi_p^s\|\leq L.\)
If both invariant subspaces are nonzero, Lemma~\ref{projection norm} yields
$$ \|\pi_p^u\|=\|\pi_p^s\|\leq L. $$
If one of them is zero, the two projection norms are \(0\) and \(1\), so the same bound holds because \(L\geq1\). Thus
$$ \|\pi_p^s\|,\|\pi_p^u\|\leq L. $$
The angle estimate follows from Lemma~\ref{projection norm} in the nontrivial case and from our convention in the trivial case. Since \(p\) was arbitrary, the result follows.
\end{proof}

\begin{remark}
The coefficient $1$ in Theorem~\ref{thm:shadowing-angle} cannot be
improved uniformly if no hyperbolicity rate is fixed. To see this, fix
two complementary nonzero lines $E,F\subset\mathbb R^2$ making an angle
$\theta\in(0,\pi/2)$, and let $P,Q$ be the corresponding complementary
projections. Put
$$
K=\|P\|=\|Q\|=\frac1{\sin\theta}.
$$
For $0<a<1$, define
$$
B_a=aP+a^{-1}Q.
$$
If $(x_n)_{n\in\mathbb Z}$ is a pseudo-orbit with errors
$$
e_n=x_{n+1}-B_ax_n,
$$
then
$$
z_n=
\sum_{j\geq0}a^jPe_{n-1-j}-\sum_{j\geq0}a^{j+1}Qe_{n+j}
$$
satisfies
$$
z_{n+1}=B_az_n+e_n
$$
and
$$
\sup_n\|z_n\|\leq
K\frac{1+a}{1-a}\sup_n\|e_n\|.
$$
Thus $B_a$ has the Lipschitz shadowing property with constant
$$
K\frac{1+a}{1-a}.
$$
Since
$$
K\frac{1+a}{1-a}\longrightarrow K
\qquad\text{as }a\downarrow0,
$$
the coefficient $1$ in Theorem~\ref{thm:shadowing-angle} is optimal when the hyperbolicity rate is allowed to vary.
\end{remark}

\subsection{A warped product construction}
\label{subsec:no-expansivity-constant}

\noindent
Let
$$ A=
\begin{pmatrix}
2&1\\
1&1
\end{pmatrix}
:\mathbb T^2\to\mathbb T^2.
$$
Its eigenvalues are $\mu$ and $\mu^{-1}$, where
$\mu=(3+\sqrt5)/2>2$. Let $g_0$ be the standard flat metric on
$\mathbb T^2$, let $d_0$ be its distance, and denote the stable and unstable
eigenspaces of $A$ by $E_A^s$ and $E_A^u$. Since $A$ is symmetric, these
eigenspaces are orthogonal with respect to $g_0$.

Since $\mu>2$, choose
$1<\alpha<\log_2\mu$ and define
$$
H(u,v)=(2u,v/2),
\qquad
\omega(u,v)=(1+u^2+v^2)^{-\alpha/2}.
$$
Set $M=\mathbb T^2\times\mathbb R^2$, write $y=(u,v)$, and define
$$
F(x,y)=(Ax,Hy),
\qquad
g=\omega(y)^2g_0+ du^2+dv^2.
$$

The manifold $M$ is connected, and its Riemannian distance satisfies
\begin{equation}\label{eq:base-distance}
d_g((x,y),(x',y'))\geq\|y-y'\|.
\end{equation}
Thus every closed $g$-ball has bounded projection to $\mathbb R^2$. On a
bounded subset of the base, $\omega$ has a positive lower bound, so $g$ is
uniformly equivalent there to the product metric. Since $\mathbb T^2$ is
compact, closed $g$-balls are compact. Thus $(M,g)$ is proper, and hence
complete.

Since the torus factor is two-dimensional, the Riemannian volume
form is
$$
d\operatorname{vol}_g=\omega(y)^2\,d\operatorname{vol}_{g_0}\,du\,dv.
$$
Therefore
$$
\begin{aligned}
\operatorname{Vol}_g(M)
&=
\operatorname{Vol}_{g_0}(\mathbb T^2)
\,2\pi\int_0^\infty r(1+r^2)^{-\alpha}\,dr\\
&=
\frac{\pi\,\operatorname{Vol}_{g_0}(\mathbb T^2)}
{\alpha-1}
<\infty.
\end{aligned}
$$

We next verify that the sectional curvature is uniformly bounded. Put
$$
\phi=\log\omega =-\frac{\alpha}{2}\log(1+\|y\|^2).
$$
Then
$$
\nabla\phi=-\frac{\alpha y}{1+\|y\|^2},
\qquad
\nabla^2\phi=-\frac{\alpha}{1+\|y\|^2}I+
\frac{2\alpha}{(1+\|y\|^2)^2}y\otimes y,
$$
so that
$$
\|\nabla\phi\|\leq\frac{\alpha}{2}.
$$
Moreover,
$$S:=\frac{\nabla^2\omega}{\omega}=
\nabla^2\phi+d\phi\otimes d\phi
$$
has radial and tangential eigenvalues
$$
\frac{\alpha((1+\alpha)r^2-1)}{(1+r^2)^2},
\qquad
-\frac{\alpha}{1+r^2},
\qquad r=\|y\|,
$$
respectively. Since $1<\alpha<\log_2\mu<2$, both eigenvalues have absolute value at
most $\alpha$. Hence
$$\|S\|_{\mathrm{op}}\leq\alpha.$$

The standard curvature formulas for a warped product with flat base and
flat fiber show that, with respect to the orthogonal decomposition
$$\Lambda^2(TM)=\Lambda^2(T\mathbb R^2)\oplus
(T\mathbb R^2\wedge T\mathbb T^2)
\oplus\Lambda^2(T\mathbb T^2),
$$
the curvature operator has blocks
$$0,\qquad -S\otimes I_{T\mathbb T^2},\qquad
-\|\nabla\phi\|^2I_{\Lambda^2(T\mathbb T^2)}.
$$
Consequently,
$$|\sec_g|\leq \max\left\{\|S\|_{\mathrm{op}},
\|\nabla\phi\|^2\right\}
\leq\alpha.
$$

Put $\kappa=2^\alpha$. Since
$\frac14(1+\|y\|^2)\leq1+\|Hy\|^2\leq4(1+\|y\|^2)$, we have
\begin{equation}\label{eq:weight-ratio}
\kappa^{-1}\leq\frac{\omega(Hy)}{\omega(y)}\leq\kappa
\qquad\text{for every }y\in\mathbb R^2.
\end{equation}

The invariant splitting of \(F\) is
$$ E^s=(E_A^s\times\{0\})\oplus\mathbb R\partial_v, \qquad E^u=(E_A^u\times\{0\})\oplus\mathbb R\partial_u, $$
and the two bundles are orthogonal. For \(w\in E_A^s\),
$$ \|DF(w,0)\|_{(Ax,Hy)} = \omega(Hy)\|Aw\|_{g_0} \leq \frac{\kappa}{\mu}\, \omega(y)\|w\|_{g_0}, $$
while
$$ \|DF\,\partial_v\|=\frac12\|\partial_v\|. $$
Since the two stable summands are orthogonal,
$$ \|DF|_{E^s}\| \leq \max\left\{\frac12,\frac{\kappa}{\mu}\right\}. $$
The same argument applied to \(F^{-1}\) on \(E^u\) gives
$$ \|DF^{-1}|_{E^u}\| \leq \max\left\{\frac12,\frac{\kappa}{\mu}\right\}. $$
Since \(\alpha<\log_2\mu\), we have \(\kappa/\mu<1\). Hence \(F\) is Anosov.

We next prove the Lipschitz shadowing property. Since $\|\nabla\log\omega(y)\|\leq\alpha/2$, set $\beta=\alpha/2$. For any
$(x,y),(x',y')\in M$, let $d=d_g((x,y),(x',y'))$. Then
\begin{equation}\label{eq:warped-comparison}
\omega(y')d_0(x,x')\leq e^{\beta d}d.
\end{equation}
Indeed, take an arbitrary rectifiable path $\gamma$ joining the two points
with length $\ell>d$. Every base point of $\gamma$ lies within Euclidean
distance $\ell$ of $y'$, so
$\omega(y(t))\geq e^{-\beta\ell}\omega(y')$. The $g_0$-length of the torus
projection of $\gamma$ is at least $d_0(x,x')$. Hence
$\ell\geq e^{-\beta\ell}\omega(y')d_0(x,x')$. Letting
$\ell\downarrow d$ proves \eqref{eq:warped-comparison}. Since $\gamma$ was
arbitrary, the estimate also covers paths that leave the fiber and pass
through regions where $\omega$ is smaller.

Let $0<\delta<1$ and let $(x_n,y_n)_{n\in\mathbb Z}$ be a
$\delta$-pseudo-orbit of $F$. Write
$\xi_n=y_{n+1}-Hy_n=(\xi_n^u,\xi_n^v)$. By
\eqref{eq:base-distance}, $\|\xi_n\|\leq\delta$. Define the two Green series
$$
h_n^u=-\sum_{j\geq0}2^{-j-1}\xi_{n+j}^u,
\qquad
h_n^v=\sum_{j\geq0}2^{-j}\xi_{n-1-j}^v.
$$
They converge absolutely, satisfy $h_{n+1}=Hh_n+\xi_n$, and obey
$|h_n^u|\leq\delta$ and $|h_n^v|\leq2\delta$. Thus
\begin{equation}\label{eq:base-correction}
\|h_n\|\leq\sqrt5\,\delta.
\end{equation}
It follows that $\zeta_n=y_n-h_n$ satisfies $\zeta_{n+1}=H\zeta_n$.

Choose lifts $\widetilde x_n\in\mathbb R^2$ such that
$\widetilde x_{n+1}=A\widetilde x_n+e_n$ for every $n\in\mathbb Z$, with
$\|e_n\|=d_0(Ax_n,x_{n+1})$. Because $A\mathbb Z^2=\mathbb Z^2$, the
lifts can be chosen consistently in both time directions. A minimizing
representative exists for every torus error, and the vectors $e_n$ are bounded
by the diameter of the flat torus.

Set $W_n=\omega(\zeta_n)$. Applying \eqref{eq:warped-comparison} to the
$n$-th error gives
$\omega(y_{n+1})\|e_n\|\leq e^{\beta\delta}\delta$. By
\eqref{eq:base-correction},
$|\log W_{n+1}-\log\omega(y_{n+1})|\leq\beta\sqrt5\,\delta$. Set
$C_e=e^{(\alpha/2)(\sqrt5+1)}$. Since $\delta<1$,
\begin{equation}\label{eq:weighted-error}
W_{n+1}\|e_n\|\leq C_e\delta.
\end{equation}
Also, $\zeta_{n+1}=H\zeta_n$ and \eqref{eq:weight-ratio} give
$\kappa^{-1}\leq W_{n+1}/W_n\leq\kappa$.

Write $A_s=A|_{E_A^s}$ and $A_u=A|_{E_A^u}$, and let $\pi_s$ and
$\pi_u$ be the corresponding orthogonal projections. Put
$\sigma=\kappa/\mu<1$ and define
$$
z_n^s=\sum_{j\geq0}A_s^j\pi_s e_{n-1-j},
\qquad
z_n^u=-\sum_{j\geq0}A_u^{-j-1}\pi_u e_{n+j},
\qquad
z_n=z_n^s+z_n^u.
$$
Both series converge absolutely because the $e_n$ are uniformly bounded and
$\mu>1$.

For the stable terms, $e_{n-1-j}$ is controlled at weight $W_{n-j}$, and
\eqref{eq:weighted-error} gives
$$
\begin{aligned}
W_n\|A_s^j\pi_s e_{n-1-j}\|
&\leq\mu^{-j}\frac{W_n}{W_{n-j}}
 W_{n-j}\|e_{n-1-j}\|\\
&\leq C_e\delta\left(\frac{\kappa}{\mu}\right)^j.
\end{aligned}
$$
For the unstable terms, $e_{n+j}$ is controlled at weight $W_{n+j+1}$, so
$$
\begin{aligned}
W_n\|A_u^{-j-1}\pi_u e_{n+j}\|
&\leq\mu^{-j-1}\frac{W_n}{W_{n+j+1}}
 W_{n+j+1}\|e_{n+j}\|\\
&\leq C_e\delta\left(\frac{\kappa}{\mu}\right)^{j+1}.
\end{aligned}
$$
Summing the geometric series yields
\begin{equation}\label{eq:green-bounds}
W_n\|z_n^s\|\leq\frac{C_e}{1-\sigma}\delta,
\qquad
W_n\|z_n^u\|\leq\frac{C_e\sigma}{1-\sigma}\delta.
\end{equation}

Shifting the two series gives the recurrence $z_{n+1}=Az_n+e_n$. Set
$\widetilde z=\widetilde x_0-z_0$ and let $z\in\mathbb T^2$ be its
projection. Comparing the recurrences in both time directions gives
$\widetilde x_n-A^n\widetilde z=z_n$ for every $n\in\mathbb Z$. By
\eqref{eq:green-bounds},
$W_nd_0(x_n,A^nz)\leq C_e(1+\sigma)\delta/(1-\sigma)$.
Join $(x_n,y_n)$ first to $(x_n,\zeta_n)$ in the Euclidean base and then to
$(A^nz,\zeta_n)$ in the torus fiber. This gives
$$
d_g\bigl((x_n,y_n),F^n(z,\zeta_0)\bigr)\leq L\delta
\qquad\text{for every }n\in\mathbb Z,
$$
where $L=\sqrt5+C_e(1+\sigma)/(1-\sigma)$. Thus $F$ has the Lipschitz
shadowing property with threshold $\delta_0=1$.

It remains to show that no expansivity constant exists. Let
$D=\operatorname{diam}_{d_0}(\mathbb T^2)$, choose distinct
$x,x'\in\mathbb T^2$, and, for $T>0$, put
$p_T=(x,T,T)$ and $q_T=(x',T,T)$. For every $n\in\mathbb Z$,
$H^n(T,T)=(2^nT,2^{-n}T)$, and therefore
$$
\|H^n(T,T)\|^2=T^2(4^n+4^{-n})\geq2T^2
\quad\mbox{ and }\quad
d_g(F^np_T,F^nq_T)
\leq D(1+2T^2)^{-\alpha/2}
$$
for every $n\in\mathbb Z$.
For any $c_0>0$, a sufficiently large $T$ makes the right-hand side smaller
than $c_0$, although $p_T\neq q_T$. Hence there is no $c_0>0$ such that $d_g(F^np,F^nq)<c_0$ for every $n\in\mathbb Z$ forces $p=q$.

\qed

\begin{remark}
The injectivity radius of $g$ is not bounded away from zero. If $\ell_0$ is the length of a shortest nonzero lattice vector of $(\mathbb T^2,g_0)$, then the corresponding deck displacement in the torus fiber has length at most $\omega(y)\ell_0$. Hence
$$
\operatorname{inj}_g(x,y) \leq \frac{\ell_0}{2}\omega(y) \longrightarrow0 \qquad\text{as }\|y\|\to\infty.
$$
Thus the example has bounded sectional curvature but no positive uniform injectivity radius.
\end{remark}


\section*{Funding}

\noindent
YY was partially supported by the National Natural Science Foundation of China (Grant No. 12101281) and Scientific Research Foundation of Education Department of Liaoning Province, China (Grant No. JYTQN2023190).

\section*{Declaration of competing interest}

\noindent
There is no competing interest.

\section*{Data availability}

\noindent
No data was used for the research described in the article.

\end{document}